\newcommand{\be}{\begin{enumerate}}
\newcommand{\ee}{\end{enumerate}}
\newcommand{\beq}{\begin{equation}}
\newcommand{\eeq}{\end{equation}}
\newtheorem{proposition}{Proposition}
\newtheorem{theorem}{Theorem}
\newtheorem{lemma}{Lemma}
\newtheorem{definition}{Definition}
\newtheorem{remark}{Remark}
\title{A Whitehead algorithm for toral relatively hyperbolic groups}
\author{Olga Kharlampovich~\footnote{Department of Mathematics and Statistics, Hunter College City University NY, New York; \newline e-mail: okharlampovich@gmail.com.} and Enric
Ventura~\footnote{Departament Matem\`{a}tica  Aplicada III, Universitat Polit\`{e}cnica de Catalunya, Manresa, Barcelona,
CATALUNYA; e-mail: enric.ventura@upc.edu.}}
\begin{document}

\maketitle

Consider the following problem about a group $G$ and its automorphisms. Given finite tuples $(u_1,\ldots ,u_n)$ and
$(v_1,\ldots ,v_n)$ of elements of $G$, decide whether there is an automorphism of $G$ taking $u_i$ to $v_i$ for all
$i$. If so, find one. We will call this problem the Whitehead Problem (WhP) for $G$. The generalized WhP is solvable if
there is an algorithm that given finite tuples $(u_{11},\ldots ,u_{1n_1},\ldots ,u_{k1},\ldots ,u_{kn_k})$ and
$(v_{11},\ldots ,v_{1n_1},\ldots ,v_{k1},\ldots ,v_{kn_k})$ of elements of $G$, decides whether there is an
automorphism of $G$ taking $u_{ij}$ to $v_{ij}^{g_i}$ for all $i,j$, and some $g_i\in G$.

Whitehead found an algorithm solving the WhP for a finitely generated free group~\cite{W}. Whitehead problem was also
solved for surface groups in~\cite{LV} and, recently, for hyperbolic groups~\cite{DGu}. The generalized WhP has been
also solved for torsion free hyperbolic groups~\cite{BV}.

Let $\mathcal G$ denote the class of toral relatively hyperbolic groups (torsion-free relatively hyperbolic groups with
abelian parabolic subgroups).

\begin{theorem}
The  WhP is solvable for  $G\in{\mathcal G}$.
\end{theorem}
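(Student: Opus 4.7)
The plan is to follow the strategy of Dahmani--Guirardel for hyperbolic groups~\cite{DGu}, which in turn extends Whitehead's approach, adapted to the toral relatively hyperbolic setting. The first ingredient is the algorithmic construction of the abelian JSJ decomposition of $G$: for $G\in\mathcal{G}$ such a decomposition exists and is computable from a finite presentation, expressing $G$ as the fundamental group of a graph of groups whose vertex groups are of three types---rigid, abelian, and quadratically hanging (QH) surface vertices---and whose edge groups are abelian.

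The second ingredient is a structural description of $\mathrm{Aut}(G)$. Every automorphism is the composition (up to inner automorphism) of a \emph{modular} automorphism---generated by Dehn twists along edges of the JSJ, by automorphisms of abelian vertex groups commuting with the adjacent edge groups, and by mapping classes of QH surface vertex groups---together with an automorphism realising one of finitely many symmetries of the JSJ graph of groups. This splits the WhP into finitely many subproblems, one per symmetry of the underlying graph.

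For a fixed symmetry, deciding whether some modular automorphism sends $u_i$ to $v_i$ for every $i$ is equivalent to the satisfiability of a system of equations (and inequations) over $G$ whose unknowns encode the Dehn twist exponents along edges, the matrices acting on the abelian vertex groups, and the mapping classes of the QH surfaces. The QH contribution is handled by reducing, via a relative Whitehead / peak-reduction argument on the boundary pattern induced by the projections of the $u_i$ to each QH vertex, to a finite combinatorial problem on curves. One then invokes the solvability of systems of equations over toral relatively hyperbolic groups (Dahmani; the first author and Myasnikov; Groves) to conclude.

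The step I expect to be the main obstacle is this last reduction: the modular group is infinite, so brute-force enumeration is impossible. Twist exponents must be constrained to a bounded range (or handled as Diophantine unknowns), the $\mathrm{GL}_n(\mathbb{Z})$-action on abelian vertices must be restricted to the subgroup commuting with edge images, and the MCG contribution must be parametrised uniformly over arbitrary input tuples. Once these three ingredients are assembled into a single first-order Diophantine statement over $G$, the algorithmic theory of equations over toral relatively hyperbolic groups delivers the decision.
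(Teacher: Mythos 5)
Your overall architecture (compute the JSJ, use that the canonical/modular automorphisms form a finite-index subgroup of $Aut(G)$, treat the finitely many outer classes separately, then decide the modular part) matches the paper's in outline, but note two omissions and one genuine gap. Before any abelian JSJ exists you must first reduce to the freely indecomposable case, which the paper does via Collins--Zieschang \cite{CZ1,CZ2}; and computing the finitely many coset representatives of $AutC_{\mathcal D}(G)$ in $Aut(G)$ is not only a matter of symmetries of the graph: it requires the finiteness and effective computability of conjugacy classes of automorphisms of the rigid vertex groups compatible with the peripheral structure (Theorem 5.11 of \cite{DG}). These are citable facts, so those two points are omissions rather than errors. (Also, what you describe is not really the route of \cite{DGu}, which passes through the isomorphism problem for auxiliary groups; it is closer to the present paper's direct approach.)

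The genuine gap is precisely the step you flag as ``the main obstacle'' and then leave unresolved: the modular part cannot be encoded as a single system of equations over $G$ and handed to \cite{Dah}. The images of the $u_i$ under a product of Dehn twists involve conjugators of the form $c^{n}$ with $n$ an unknown integer, i.e.\ membership in a cyclic subgroup, which is not an equational condition; and, worse, a mapping class of a QH vertex group is an unknown ranging over an infinite non-abelian group whose action on elements is not definable by equations, so it cannot be treated as a Diophantine unknown at all. The paper's solution is to make all these unknowns finite \emph{before} any equation is written: for QH vertices, Levitt--Vogtmann's minimal representatives and self-intersection counts \cite{LV} give a computable bound on the boundary-twisting exponents $m,n$ for which an automorphism with $\alpha(u)=d^{m\delta}vc^{n\gamma}$ can exist (Lemmas \ref{QH1} and \ref{QH2}); for the conjugators across an edge, malnormality of maximal abelian subgroups (the CSA property) shows that the pair $\gamma_1,\gamma_2\in C$ with $w=\gamma_1v\gamma_2$ is unique when it exists, and only at that point is Dahmani's decidability of equations invoked, to find it. These finiteness statements are then propagated vertex-by-vertex through the graph of groups via the relative ``special Whitehead problem'', which records the coset data along edge groups. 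Without an argument bounding the twist exponents and the QH mapping classes, your reduction to a decidable Diophantine problem does not go through.
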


This result implies that the WhP is solvable in limit groups and torsion-free hyperbolic groups. Notice, that in
\cite{DGu} the WhP was solved as a consequence of the solution of the isomorphism problem for hyperbolic groups. For
every two tuples of elements $(u_1,\ldots ,u_n)$ and $(v_1,\ldots ,v_n)$ of $G$ one has to construct JSJ decompositions
of auxiliary groups $G_1(u_1,\ldots ,u_n)$ and $G_2(v_1,\ldots ,v_n)$ obtained from $G$ and then decide whether $G_1$
and $G_2$ are isomorphic. The algorithm in~\cite{DGu} for the construction of the JSJ decomposition involves complete
enumeration of all presentations of the group obtained by Tietze transformations. The advantage of our approach is that
we have to construct the JSJ decomposition only for the original $G$ and only once, therefore it has lower complexity.

We begin the proof by mentioning that in 1984, Collins and Zieschang extended Whitehead's methods to free products of
finitely many freely indecomposable groups, assuming that WhP can be solved in each factor~\cite{CZ1},\cite{CZ2}.
Therefore, without loss of generality, we can restrict ourselves to consider only the case when $G$ is freely
indecomposable. Groups from this class have algorithmically computable canonical $Out(G)$–-invariant abelian JSJ
decompositions~\cite{DG} with all parabolic subgroups being elliptic.

\begin{definition}\label{defn:canonical autos1}
Let $G=A*_{C}B$ be an elementary abelian splitting of a freely indecomposable group $G$. For $c\in C$ we  define an
automorphism $\phi_c :G\rightarrow G$ such that $\phi_c(a)=a$ for $a\in A$ and $\phi_c(b)=b^{c}=c^{-1}bc$ for  $b\in
B$.

If $G=A*_{C}=\langle A,t|c^{t}=c', c\in C\rangle$ (where $c$ and $c'$ represent the images of the same element of $C$
under the two given inclusions $\alpha,\, \omega \colon C\to A$) then for $c \in C$ define $\phi_c :G\rightarrow G$
such that $\phi_c (a)=a$ for $a\in A$ and $\phi_c (t)=ct$.

In both cases, we call $\phi_c$ a {\em Dehn twist} obtained from the corresponding elementary abelian splitting of $G$.

Note that, if $G=A*_{C}B$, then every
automorphism of $B$ acting trivially on $C$ can be extended to a unique automorphism of $G$ acting trivially on $A$.

\end{definition}

\begin{definition}\label{defn:canonical autos}
Let $G$ be a freely indecomposable group, and let $\Gamma(V,E;T)$  be an Abelian JSJ decomposition of $G$ (computable
from a given presentation for $G$).  We define the group  $Out_{\Gamma}(G)$, to be the subgroup of $Out(G)$ generated
by the following types of automorphisms of $G$:
\begin{enumerate}
\item \label{e:Dehn twist} Dehn twists along edges in $\Gamma$,
\item \label{e:abelian vertices} automorphisms of an abelian vertex group that preserve the peripheral subgroups of the
group,
\item \label{e:mapping class groups} automorphisms of a $QH$-vertex group $G_u$ preserving the peripheral subgroups of
the group, up to conjugacy (geometrically, these are Dehn twists along simple closed curves on the punctured surface
$\Sigma$ with $\pi_1(\Sigma)\cong G_u$).
\end{enumerate}
The full preimage of $Out_{\Gamma}(G)\leqslant Out(G)$ in $Aut(G)$ (which, of course, contains all inner automorphisms)
is called the \emph{group of canonical automorphisms with respect to $\Gamma$}, denoted $AutC_{\Gamma}(G)$.
\end{definition}

\begin{lemma}~\cite{GL1}
With the notation of Definition~\ref{defn:canonical autos}, $[Out(G):Out_{\Gamma}(G)]<\infty$ and hence, the group of
canonical automorphisms of $G$ has finite index in the group of all automorphisms of $G$,
$[Aut(G):AutC_{\Gamma}(G)]<\infty$.
\end{lemma}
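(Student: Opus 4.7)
The plan is to exploit the canonicity of the abelian JSJ decomposition $\Gamma$. By the very construction of a canonical JSJ decomposition (in the sense of~\cite{GL1}), if $\phi\in Aut(G)$ then applying $\phi$ to every vertex and edge group of $\Gamma$ produces another abelian JSJ decomposition which must be equivalent to $\Gamma$; i.e.\ $\Gamma$ is $Out(G)$-invariant. This yields a well-defined action of $Out(G)$ on the combinatorial data of $\Gamma$: the finite underlying graph $(V,E)$, together with the labelling of vertices as rigid, $QH$, or abelian, and the partition of edges by their edge groups. Since the automorphism group of such a finite labelled graph is itself finite, the kernel $K$ of the induced homomorphism $Out(G)\to Aut(V,E;\text{labels})$ has finite index in $Out(G)$.

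Next I would show that $K\leqslant Out_{\Gamma}(G)$. A class $[\phi]\in K$ has a representative fixing each vertex and each edge of $\Gamma$ and preserving the conjugacy class of each vertex group $G_v$ and each edge group $G_e$. On a rigid vertex group $G_v$, the restriction of $\phi$ preserves the conjugacy classes of all incident edge groups; by the very definition of rigidity for an Abelian JSJ, such an automorphism of $G_v$ is inner. After adjusting by inner automorphisms of $G$ and by Dehn twists along edges (which absorb the mismatches that appear when reassembling the graph of groups from its vertex data), one may further assume $\phi$ acts trivially on every rigid vertex group and on every edge group. The remaining freedom is supported on abelian and $QH$ vertex groups, and acts on them by automorphisms preserving the peripheral structure; this is exactly the content of items (\ref{e:abelian vertices}) and (\ref{e:mapping class groups}) of Definition~\ref{defn:canonical autos}. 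Hence $[\phi]\in Out_{\Gamma}(G)$, so $[Out(G):Out_{\Gamma}(G)]\leq[Out(G):K]<\infty$.

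The statement about $AutC_{\Gamma}(G)$ then follows for free: by definition $AutC_{\Gamma}(G)$ is the preimage of $Out_{\Gamma}(G)$ under the quotient map $Aut(G)\twoheadrightarrow Out(G)$, and the index of a preimage equals the index of the image, giving $[Aut(G):AutC_{\Gamma}(G)]=[Out(G):Out_{\Gamma}(G)]<\infty$.

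The main obstacle is the step in which the \emph{ad hoc} correction by Dehn twists lets one assume $\phi$ fixes the skeleton pointwise — concretely, the fact that an automorphism of $G$ preserving $\Gamma$ vertex-wise and edge-wise is, up to inner automorphisms, a product of Dehn twists, peripheral-preserving automorphisms of abelian vertices, and peripheral-preserving mapping-class-group elements of $QH$ vertices. This is the technical core of \cite{GL1} (it is what their deformation-space / tree-of-cylinders formalism produces), and I would invoke it as a black box rather than reproving it here. Everything else in the argument is graph-theoretic finiteness plus the standard preimage-of-finite-index computation.
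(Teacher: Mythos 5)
The paper gives no proof of this lemma at all --- it is simply quoted from \cite{GL1} --- so your sketch is being measured against the standard Guirardel--Levitt argument rather than against anything in the text. Your overall architecture (invariance of the canonical JSJ $\Rightarrow$ action of $Out(G)$ on a finite labelled graph $\Rightarrow$ finite-index kernel $K$ $\Rightarrow$ show $K\leqslant Out_\Gamma(G)$, then pull back along $Aut(G)\twoheadrightarrow Out(G)$) is the right one, and the final preimage step is fine.

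There is, however, one concretely wrong step: the claim that ``by the very definition of rigidity for an Abelian JSJ, such an automorphism of $G_v$ is inner.'' Rigidity of a vertex group means it is elliptic in every tree of the deformation space (it admits no further splitting relative to its incident edge groups); it does \emph{not} mean that every automorphism of $G_v$ preserving the conjugacy classes of the incident edge groups is inner. A rigid vertex group can perfectly well carry a nontrivial finite outer automorphism group relative to its peripheral structure (think of an orientation-reversing symmetry). The correct statement --- and the one this paper actually relies on, see Proposition~\ref{DG} quoting Theorem~5.11 of \cite{DG}, and the subsequent remark that ``there is only a finite number (up to conjugation) of automorphisms of a rigid subgroup onto itself preserving its peripheral subgroups up to conjugacy'' --- is finiteness, not triviality, of this group. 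Consequently $K\leqslant Out_\Gamma(G)$ is false in general; what is true is that the subgroup of $K$ acting by conjugation on every rigid vertex group has finite index in $K$, and it is \emph{that} subgroup which (after the Dehn-twist corrections and the black-boxed kernel analysis you describe) lands in $Out_\Gamma(G)$. So the finite index has two sources --- the finite graph and the finite relative outer automorphism groups of the rigid vertices --- and your sketch accounts for only the first. The conclusion survives, and the repair is exactly Proposition~\ref{DG}, but as written the argument proves containment of a group that need not be contained.
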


The following proposition implies that one can effectively find representatives of all conjugacy classes of
automorphisms of rigid subgroups compatible with edge groups.

\begin{proposition} \label{DG}[Theorem 5.11, \cite{DG}]
Let $G$ (respectively, $H$) be a toral relatively hyperbolic group, and let $\mathcal A=(A_1,\ldots ,A_n)$ (resp.,
$\mathcal B=(B_1,\ldots ,B_n)$) be a finite list of non-conjugated maximal abelian subgroups of $G$ (resp., $H$) such
that the abelian decomposition of $G$ modulo $\mathcal A$ (resp. of $H$ modulo $\mathcal B$) is trivial. The number of
conjugacy classes of monomorphisms from $G$ to $H$ that map subgroups from $\mathcal A$ onto conjugates of the
corresponding subgroups from $\mathcal B$ is finite. A set of representatives of the equivalence classes can be
effectively found.

If $G=H$, then there is at most a finite number of conjugacy classes of automorphisms compatible with the peripheral
structure, and there is an algorithm to find representatives of all of them.
\end{proposition}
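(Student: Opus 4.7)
The plan is to prove finiteness by a Bestvina--Paulin / Rips machine argument in the style of Sela's shortening, and then to extract the algorithm from an effective bound on displacements. Fix a finite generating set $S=\{s_1,\ldots,s_m\}$ of $G$ and work in the cusped (Groves--Manning) space $X$ of $H$, with basepoint $x_0$; for a monomorphism $f\colon G\to H$ set $\mu(f):=\max_{s\in S} d_X(x_0,f(s)\cdot x_0)$. After composing with an inner automorphism of $H$ we may assume that $f$ realizes the minimum of $\mu$ in its conjugacy class.

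Suppose, for contradiction, that there are infinitely many pairwise non-conjugate compatible monomorphisms $f_k$. If $\mu(f_k)$ were bounded, then only finitely many tuples $(f_k(s_1),\ldots,f_k(s_m))$ could occur in $H$, forcing two of the $f_k$ to coincide, a contradiction. Hence $\mu(f_k)\to\infty$, and rescaling the metric on $X$ by $1/\mu(f_k)$ and passing to an equivariant Gromov--Hausdorff limit (as in Bestvina--Paulin, extended to the relatively hyperbolic setting by Dahmani and by Groves), we obtain a non-trivial minimal isometric action of $G$ on an $\mathbb R$-tree $T$ whose arc stabilizers are abelian. The compatibility hypothesis $f_k(A_i)\subseteq g_{i,k}B_i g_{i,k}^{-1}$ forces each $A_i$ to fix a point of $T$: parabolic displacement in the rescaled cusped space degenerates, so the $A_i$ become elliptic in the limit.

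Since the action of $G$ on $T$ is stable, minimal, has abelian arc stabilizers, and makes each $A_i$ elliptic, the Rips machine (Bestvina--Feighn, Guirardel; adapted to toral relatively hyperbolic groups in \cite{DG}) produces a non-trivial graph-of-groups decomposition of $G$ over abelian subgroups in which every $A_i$ is elliptic. This contradicts the hypothesis that the abelian decomposition of $G$ modulo $\mathcal A$ is trivial, establishing finiteness of the set of conjugacy classes.

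For the algorithmic statement, the same argument, made quantitative, must produce a computable bound $L=L(G,H,\mathcal A,\mathcal B)$ such that every conjugacy class of compatible monomorphisms has a representative with $\mu(f)\le L$. One then enumerates all $m$-tuples $(h_1,\ldots,h_m)\in H^m$ with $\max_i d_X(x_0,h_i\cdot x_0)\le L$; for each, one tests (using the solvable word, conjugacy, and subgroup-membership problems in toral relatively hyperbolic groups) whether $s_i\mapsto h_i$ defines a homomorphism, whether it is injective, and whether it sends each $A_i$ into a conjugate of $B_i$; finally one quotients the surviving finite list by $H$-conjugacy. The main obstacle throughout is this effectivization step: the compactness/Rips argument yields only the existence of $L$, and turning it into a computable bound requires the quantitative version of the Rips analysis for stable $\mathbb R$-tree actions over toral relatively hyperbolic groups, which is precisely the content of \cite{DG}.
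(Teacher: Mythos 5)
First, a point of orientation: the paper does not prove this proposition at all --- it is imported verbatim as Theorem~5.11 of \cite{DG}, so your attempt has to be measured against the Dahmani--Groves proof rather than against anything in this text. For the finiteness half, your outline (minimize $\mu$ over the conjugacy class, pass to a rescaled limit action on an $\mathbb R$-tree, run the Rips machine, contradict triviality of the abelian decomposition modulo $\mathcal A$) is indeed the strategy of \cite{DG}. Two steps are asserted rather than proved, and one of them is a real issue. You need the limit action to be faithful (this uses injectivity of the $f_k$) and stable, and, more seriously, you need each $A_i$ to be elliptic in the limit tree: your justification covers only the case where $B_i$ is parabolic. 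When $B_i$ is a maximal \emph{loxodromic} cyclic subgroup, nothing "degenerates" automatically; what saves the argument is that the hypothesis says $f_k(A_i)$ is mapped \emph{onto} a conjugate of $B_i$, so the translation length of $f_k(a)$ in the cusped space equals that of a fixed generator of $B_i$, is therefore bounded independently of $k$, and hence vanishes after rescaling by $1/\mu(f_k)$. Without ellipticity of the $A_i$ the splitting produced by the Rips machine need not be a splitting \emph{modulo} $\mathcal A$, and no contradiction with rigidity is obtained.

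The genuine gap is in the algorithmic half, and you have in fact located it yourself: your plan requires a \emph{computable} bound $L$ on the displacement of a shortest representative in each conjugacy class, and you propose to extract it from "the quantitative version of the Rips analysis." No effective version of the Rips machine or of the shortening argument is known, and this is not how \cite{DG} obtain effectiveness. Their route is entirely different: compatible homomorphisms $G\to H$ are exactly the solutions of an explicit finite system of equations (relators of $G$, plus commutation constraints encoding that each $A_i$ lands in a conjugate of $B_i$), and Dahmani's decidability of the existential theory of toral relatively hyperbolic groups \cite{Dah} lets one enumerate these solutions and compare them up to conjugacy; non-injectivity is certified by exhibiting a kernel element, and the completeness of a candidate finite list is certified by a separate argument that never produces a displacement bound. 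So the compactness argument is used only for the abstract finiteness statement, while the algorithm comes from equations. As written, your proof establishes finiteness (modulo the ellipticity point above) but not the effective statement, which is the part this paper actually needs.
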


We can suppose that $G$ in not abelian and not a closed surface group. We compute a canonical JSJ decomposition
$\mathcal D$ for $G$, with the extra property that parabolic subgroups are elliptic. Notice that $AutC_{\mathcal D}(G)$
consists of automorphisms $\phi$ that map every vertex group of $\mathcal D$ into a conjugate of itself and have the
following property: for any rigid subgroup $H$ there exists $g\in G$ such that $\phi (h)=ghg^{-1}$ for any $h\in H$. By
Lemma \ref{DG}, $[Aut (G):AutC_{\mathcal D}(G)]<\infty.$ Every automorphism of $G$ maps $H$ to a conjugate of a rigid
subgroup, and there is only a finite number (up to conjugation) of automorphisms of a rigid subgroup onto itself
preserving its peripheral subgroups up to conjugacy. We can effectively find all such automorphisms and, therefore,
compute left coset representatives $\tau_1,\ldots ,\tau _k$ of $AutC_{\mathcal D}(G)$ in $Aut(G)$. Then, to decide
whether the tuple $(u_1,\ldots ,u_n)$ is in the orbit of the tuple $(v_1,\ldots ,v_n)$ with respect to $Aut(G)$, we
have to decide whether $(\tau _i(u_1),\ldots ,\tau _i(u_n))$ is in the orbit of $(v_1,\ldots ,v_n)$ with respect to
$AutC_{\mathcal D}(G)$, for some $\tau_i$. Therefore, to solve the WhP in $G$ we are reduced to solving the WhP for the
group of canonical automorphisms $AutC(G)$.

Combining foldings and slidings, we can transform the JSJ decomposition $\mathcal D$ in such a way that each non-cyclic
abelian vertex group that is connected to a rigid subgroup is connected to only one vertex group and this vertex group
is rigid. We fix such a decomposition and denote it again by $\mathcal D$. We also fix a maximal forest $T_1$ joining
all non-abelian vertex groups, and a maximal subtree $T$ of $\mathcal D$ with $T_1\subseteq T$. From now on, all
canonical automorphisms will be with respect to $\mathcal D$. We order edges in $T_1$ and take free products with
amalgamation following this order; then, we order the rest of the edges of $\mathcal D$ that are not in $T$, assign
stable letters to these edges and take HNN extensions in this order. After that, we order edges in $T-T_1$.

\begin{lemma}\label{QH1}
Let $C=<c>, D=<d>$, $C\neq D$ be edge groups of a QH-subgroup $Q$. For any $u,v\in Q$ there exists a bound on possible
numbers $m,n$ such that there exists an automorphism $\alpha$ of $Q$ with $\alpha(u)=d^{m\delta}vc^{n\gamma}, \alpha
(c)=c^{\gamma}, \alpha (d)=d^{\delta}$, for some $\gamma ,\delta \in Q$. Moreover, there exists an algorithm to find
such a bound, all valid values of $m,n$ and, for each pair $m,n$, an automorphism $\alpha$.
\end{lemma}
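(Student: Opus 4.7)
My plan is to reduce the problem to the generalized Whitehead problem for $Q$ (solvable by~\cite{BV} since $Q$ is torsion-free hyperbolic) and then to derive a computable upper bound on $|m|+|n|$ from the surface-group structure of $Q$.

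First I would simplify algebraically. Since $\alpha(c)=c^{\gamma}$ and $\alpha(d)=d^{\delta}$ give $d^{m\delta}=\alpha(d^m)$ and $c^{n\gamma}=\alpha(c^n)$, the equation $\alpha(u)=d^{m\delta}vc^{n\gamma}$ is equivalent to $\alpha(d^{-m}uc^{-n})=v$. Setting $\beta=\alpha^{-1}$, the lemma becomes the assertion that the set
\[
S=\{(m,n)\in\mathbb{Z}^2 \,:\, \exists\,\beta\in\mathrm{Aut}(Q),\ \beta(v)=d^{-m}uc^{-n},\ \beta(\langle c\rangle)\sim\langle c\rangle,\ \beta(\langle d\rangle)\sim\langle d\rangle\}
\]
is finite and effectively computable, with a witness $\beta$ producible for each $(m,n)\in S$.

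For decidability at a fixed $(m,n)$, I would apply the generalized Whitehead algorithm of~\cite{BV} to the triples $(v,c,d)$ and $(d^{-m}uc^{-n},c,d)$: it outputs, when one exists, an automorphism $\beta$ whose values on $v,c,d$ are conjugate respectively to $d^{-m}uc^{-n},c,d$. Absorbing the conjugator in the first slot into an inner automorphism of $Q$ then yields $\beta$ meeting the exact requirement, and hence the desired $\alpha=\beta^{-1}$.

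To bound $S$, I would realize $Q=\pi_1(\Sigma)$ with $\Sigma$ a compact surface whose boundary components include the curves $\partial_c,\partial_d$ representing $\langle c\rangle,\langle d\rangle$. Any admissible $\beta$ is induced by a self-homeomorphism of $\Sigma$ fixing $\partial_c$ and $\partial_d$ setwise. The relative mapping class group $\mathrm{MCG}(\Sigma,\partial_c,\partial_d)$ is finitely generated by Dehn twists along a computable system of essential simple closed curves in $\Sigma$, together with a finite set of non-twist generators, while twists along $\partial_c$ or $\partial_d$ act trivially on $\pi_1(\Sigma)$. I would track the effect of each generator on a chosen normal form for $v$; requiring $\beta(v)=d^{-m}uc^{-n}$ on the nose translates into linear Diophantine constraints on the twist coordinates of $\beta$ and on $(m,n)$, whose solution set for fixed $u,v$ is finite. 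This yields a computable bound $B(u,v)$ with $|m|+|n|\le B(u,v)$ for every $(m,n)\in S$; the algorithm then enumerates this finite box and runs the Whitehead test for each pair.

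The main obstacle is this last step: carrying out the combinatorial book-keeping of the mapping-class-group action on the boundary exponents precisely enough to extract an explicit bound $B(u,v)$. Making this effective requires tracking how each Dehn twist along an interior simple closed curve, combined with the finitely many non-twist generators, modifies the ``boundary tails'' $d^{-m}$ and $c^{-n}$ that appear in a normal form of $\beta(v)$, and distinguishing the genuinely non-trivial contributions from those absorbed by inner automorphisms.
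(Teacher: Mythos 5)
Your algebraic reduction to $\alpha(d^{-m}uc^{-n})=v$ is exactly the first step of the paper's argument, and invoking a Whitehead-type algorithm to test a single fixed pair $(m,n)$ is a reasonable way to get decidability once a bound is in hand. But the entire content of the lemma is the computable bound on $(m,n)$, and that is precisely the step you leave open: the ``linear Diophantine constraints on the twist coordinates'' are asserted, not established, and you flag this yourself as the main obstacle. The paper closes this gap with the device of Levitt--Vogtmann (Lemmas~3.4 and~3.5 of \cite{LV}): choose a basepoint $P$ on the boundary component corresponding to $C$ and represent $v$ and $d^{-m}uc^{-n}$ by based loops of minimal self-intersection number; since any admissible $\alpha$ is induced by a homeomorphism of the surface, the minimal self-intersection numbers of $v$ and of $d^{-m}uc^{-n}$ must coincide; and the minimal self-intersection number of $d^{-m}uc^{-n}$ grows without bound in $|m|$ and $|n|$, which yields the computable bound and, together with the effective procedure of \cite{LV}, the list of valid pairs and witnessing automorphisms. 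This single invariant replaces all of your proposed book-keeping of the mapping-class-group action on normal forms; without it (or an equivalent) the proof is incomplete.

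A secondary inconsistency: the automorphisms produced by the generalized Whitehead algorithm of \cite{BV} applied to the triples $(v,c,d)$ and $(d^{-m}uc^{-n},c,d)$ are arbitrary automorphisms of the (free) group $Q$ preserving the conjugacy classes of $c$ and $d$; they need not be induced by homeomorphisms of $\Sigma$, since they may fail to preserve the remaining boundary subgroups. Your proposed bound, however, is derived from the mapping class group, so it would not constrain such non-geometric automorphisms, and the finiteness assertion itself is only justified for geometric ones. The automorphisms relevant to the rest of the paper (restrictions of canonical automorphisms to the QH vertex group) are exactly the geometric ones, so you should fix that class and work with it throughout, as \cite{LV} does.
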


\begin{proof}
The proof is similar to the proof of Lemmas~3.4 and~3.5 in~\cite{LV}. Notice that, under the assumptions $\alpha
(d)=d^{\delta}$ and $\alpha (c)=c^{\gamma}$, $\alpha (u)=d^{m\delta}vc^{n\gamma}$ iff $\alpha (d^{-m}uc^{-n})=v$. We
choose a base point $P$ on the boundary corresponding to $C$ and represent $u,v$ as closed curves on the surface.
Moreover, we take minimal representatives in the sense of \cite{LV}. Then  minimal representatives for $v$ and
$d^{-m}uc^{-n}$ must have the same number of self-intersection points. The existence of such $\alpha$ can be
effectively verified as in~\cite{LV}.
\end{proof}

\begin{remark}
For each $u,v,\gamma \in Q$ there is at most one number $n$ for which there exists $\alpha$ with the properties that
$\alpha (u)=vc^{n\gamma}$ and $\alpha (c)=c^{\gamma}$. Indeed if, in addition, $\beta (u)=vc^{m\gamma}$ and $\beta
(c)=c^{\gamma}$ for some others $m$ and $\beta$, then $\beta\alpha^{-1} (vc^{n\gamma})=vc^{m\gamma}$ and $\beta\alpha
^{-1}(c^{\gamma})=c^{\gamma}$. Now, choosing the basepoint $P$ on $c^{\gamma}$, the curves $vc^{n\gamma}$ and
$vc^{m\gamma}$ have different number of self-intersections unless $n=m$.
\end{remark}

A multiple version of Lemma~\ref{QH1} gives the following lemma.

\begin{lemma}\label{QH2}
Let $C=<c>, D=<d>$, $C\neq D$ be edge groups of a QH-subgroup $Q$. For any finite set $I$ and tuples of elements
$(u_i)_{i\in I}$ and $(v_i)_{i\in I}$ from $Q$, there exists a bound on possible numbers $m_i,n_i$ for which there
exists an automorphism $\alpha$ of $Q$ with $\alpha(u_i)=d^{m_i\delta}v_ic^{n_i\gamma}, \alpha (c)=c^{\gamma}, \alpha
(d)=d^{\delta}$, for some $\gamma ,\delta \in Q$. Moreover, there exists an algorithm to compute such bound, all valid
values of $m_i,n_i$ and, for each pair of tuples $(m_i)_{i\in I}$ and $(n_i)_{i\in I}$, an automorphism $\alpha$.
\end{lemma}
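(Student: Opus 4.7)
The plan is to reduce Lemma~\ref{QH2} to Lemma~\ref{QH1} applied coordinate-wise for the bound, and then to extend the surface/multicurve argument underlying the proof of Lemma~\ref{QH1} (following~\cite{LV}) to the joint setting in order to enumerate the valid tuples and produce the automorphism.

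First I would establish the bound as follows: if $\alpha$ is an automorphism realising the tuple conditions for $(u_i)_{i\in I}$ and $(v_i)_{i\in I}$, then fixing any single $i$ exhibits $\alpha$ as a witness for the hypotheses of Lemma~\ref{QH1} applied to the pair $(u_i,v_i)$. Hence each $|m_i|$, $|n_i|$ is bounded by the computable constants $M_i$, $N_i$ supplied by that lemma, and $M:=\max_i M_i$, $N:=\max_i N_i$ give a uniform, computable bound for the whole tuple.

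With the bounds in hand, the algorithm enumerates candidate tuples $((m_i)_{i\in I},(n_i)_{i\in I})$ with $|m_i|\le M_i$ and $|n_i|\le N_i$. For each candidate, using the identity $\alpha(u_i)=d^{m_i\delta}v_ic^{n_i\gamma}\iff\alpha(d^{-m_i}u_ic^{-n_i})=v_i$ (valid once $\alpha(c)=c^\gamma$ and $\alpha(d)=d^\delta$), the problem is equivalent to deciding whether there exists an automorphism $\alpha$ of $Q$ with $\alpha(c)=c^\gamma$, $\alpha(d)=d^\delta$ for some $\gamma,\delta\in Q$, and $\alpha(w_i)=v_i$ for every $i$, where $w_i:=d^{-m_i}u_ic^{-n_i}$. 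Representing $Q$ as $\pi_1(\Sigma,P)$ for a punctured surface $\Sigma$ with $P$ on the boundary component corresponding to $C$, and viewing $(w_i)$ and $(v_i)$ as ordered tuples of based closed curves, the existence of such $\alpha$ is equivalent to the existence of a self-homeomorphism of $\Sigma$ carrying the multicurve $(w_i)$ to $(v_i)$ and sending the $C$- and $D$-boundary components to themselves in the way prescribed by $\gamma,\delta$. As in~\cite{LV} one passes to minimal representatives; the resulting combinatorial decision procedure both detects the compatible candidates and realises the corresponding homeomorphism, from which $\alpha$ is read off. The remark after Lemma~\ref{QH1} moreover guarantees that, once $\gamma$ and $\delta$ are fixed, the tuples $(m_i),(n_i)$ are uniquely determined, which keeps the enumeration finite and consistent.

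The main obstacle I foresee is upgrading the single-curve self-intersection argument of~\cite{LV} to ordered multicurves: one must show that the ordered list of self-intersection numbers of the $w_i$ together with the pairwise geometric intersection numbers of $w_i$ and $w_j$ is a complete, homeomorphism-invariant, algorithmically computable invariant of the multicurve, and that simultaneous minimality is achievable. This is expected to go through because minimality is controlled independently for each $w_i$ by Lemma~\ref{QH1}, and the intersection invariants behave additively over the tuple, so that the machinery of~\cite{LV} extends without structural change.
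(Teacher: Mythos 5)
Your proposal follows essentially the same route as the paper, which offers no separate proof of Lemma~\ref{QH2} beyond the remark that it is ``a multiple version of Lemma~\ref{QH1}'': you obtain the bound coordinate-wise from Lemma~\ref{QH1} and then extend the Levitt--Vogtmann minimal-representative argument to the tuple setting. One caution on the step you flag as the main obstacle: the invariant you propose (self-intersection numbers of the $w_i$ plus pairwise geometric intersection numbers) is \emph{not} a complete homeomorphism invariant of an ordered multicurve, so the argument should instead rest, as in~\cite{LV}, on the finite enumeration of minimal-position configurations and a combinatorial comparison of those configurations (equivalently, the topological type of the complement), which is what actually carries over to tuples.
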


\begin{lemma}
Let $G\in{\mathcal G}$, and take elements $v,w\in G$ and an abelian subgroup $C\leqslant G$. If either $v$ or $w$ do
not belong to the maximal abelian subgroup containing $C$, then there exists at most one pair of elements $\gamma_1,
\gamma _2\in C$ such that $w=\gamma_1v\gamma _2$; furthermore, there is an algorithm deciding whether it exists or not
and, in the affirmative case, computing such elements $\gamma_1, \gamma _2\in C$.
\end{lemma}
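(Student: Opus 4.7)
Write $M$ for the maximal abelian subgroup of $G$ containing $C$; in a toral relatively hyperbolic group $M$ is malnormal in $G$, finitely generated, and algorithmically computable (for instance as the centraliser of any nontrivial element of $C$).

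First I would handle uniqueness. If $v\in M$ but $w\notin M$, then any product $\gamma_1v\gamma_2$ with $\gamma_i\in C\leqslant M$ lies in $M$, so no such pair exists and the uniqueness assertion is vacuous. If instead $v\notin M$ and $\gamma_1v\gamma_2 = \gamma_1'v\gamma_2' = w$ with $\gamma_i,\gamma_i'\in C$, rearranging gives $(\gamma_1')^{-1}\gamma_1 = v(\gamma_2'\gamma_2^{-1})v^{-1}\in M\cap vMv^{-1}$. Malnormality of $M$ together with $v\notin M$ force $M\cap vMv^{-1}=1$, so $\gamma_1=\gamma_1'$ and then $\gamma_2=\gamma_2'$.

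For the algorithm, I would first decide which case we are in by testing $v\in M$; this is decidable since $M$ is a computable finitely generated abelian subgroup of $G$, so membership reduces to the word problem plus linear algebra in $M\cong\mathbb Z^k$. If $v\in M$, also test $w\in M$; the hypothesis of the lemma forces $w\notin M$ and we output ``no solution''. In the substantive case $v\notin M$, choose a basis $c_1,\ldots,c_r$ of $C$ and write $\gamma_1 = c_1^{a_1}\cdots c_r^{a_r}$, $\gamma_2 = c_1^{b_1}\cdots c_r^{b_r}$ with integer unknowns. Existence of $\gamma_1,\gamma_2\in C$ satisfying $\gamma_1v\gamma_2=w$ becomes an equation with integer-exponent unknowns over $G$, that is, a Diophantine problem in $G$, which is decidable for toral relatively hyperbolic groups (by decidability of the existential theory of such groups). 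Once a witness is found, the uniqueness part identifies it as the required pair and the exponents are read off.

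The hard part is precisely this last step: one must invoke a sufficiently strong algorithmic result for toral relatively hyperbolic groups, or equivalently the solvability of the double-coset membership problem $w\in MvM$ for the maximal abelian $M$. Malnormality is what turns the decision procedure into an effective construction by guaranteeing that the witness is unique. A more self-contained alternative is to observe that $\langle M,vMv^{-1}\rangle = M*vMv^{-1}$ (by malnormality and $v\notin M$), so $w=\gamma_1v\gamma_2$ is equivalent to $wv^{-1}$ lying in this subgroup with free-product normal form of length at most two of the prescribed form; after that, verifying $\gamma_1,\gamma_2\in C$ is the trivially decidable membership problem in $C\leqslant M\cong\mathbb Z^k$. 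The technical cost of this reformulation is showing that membership and normal-form computation in $\langle M,vMv^{-1}\rangle$ are effective, which seems comparable to invoking the equation solver directly.
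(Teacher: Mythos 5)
Your proof is correct and follows essentially the same route as the paper: uniqueness via malnormality of the maximal abelian subgroup (the CSA property of toral relatively hyperbolic groups), and decidability via Dahmani's solvability of equations in toral relatively hyperbolic groups. The only real difference is how membership in $C$ is encoded: the paper imposes the constraint as the group equation $[c,\gamma_i]=1$ (i.e.\ membership in the maximal abelian subgroup, which suffices together with a posteriori linear algebra on the unique witness), whereas your integer-exponent formulation is not literally an existential sentence over $G$ and should be replaced by that same commutation trick.
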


\begin{proof}
Assume $v$ (or $w$) does not belong to the maximal abelian subgroup $C'$ of $G$ containing $C$, and suppose
$w=\gamma_1v\gamma _2 =\gamma_3v\gamma _4$, for some $\gamma_1, \gamma_2, \gamma_3, \gamma_4 \in C$. Then,
$v^{-1}(\gamma _3^{-1}\gamma_1)v=\gamma _4\gamma_2^{-1}.$ By the CSA property of toral relatively hyperbolic groups
(see Lemma~2.5 in~\cite{Gro}), $C'$ is malnormal and so, $\gamma_1=\gamma_3$ and $\gamma_2=\gamma_4$.

To make the decision algorithmic, we remind that equations are solvable in toral relatively hyperbolic
groups~\cite{Dah}. Therefore, we can decide whether such $\gamma_1, \gamma _2$ exist or not in $G$ (the fact that
$\gamma _i\in C$ can be expressed by the equation $[c,\gamma _i]=1$).
\end{proof}

\begin{definition}
Let ${\mathcal D}$ be an abelian JSJ decomposition of a freely indecomposable $G\in{\mathcal G}$ with a graph $\Gamma$
that does not have abelian vertices. Let $\Gamma _1$ be a connected subgraph of $\Gamma$, and $B$ be the fundamental
group of $\Gamma _1$, $B\leqslant G$. An automorphism of $B$  is called ${\mathcal D}$-compatible if it  takes vertex
subgroups of $\Gamma _1$ into conjugates of themselves, and edge subgroups of these vertices into conjugates of edge
subgroups. Let $C=G_{e}$ be an edge group in ${\mathcal D}$, $e\not\in\Gamma _1$, $K=G_{e'}$ be different edge group,
$e'\not\in \Gamma _1$, and suppose that for any $u,v\in B$ there exists only finitely many elements $c\in C$ and $k\in
K$ such that $u$ is taken to $k^{\delta} vc^\gamma$ by a ${\mathcal D}$-compatible automorphism of $B$ sending $c$ to
$c^{\gamma}$ and $k$ to $k^{\delta}$. We say that the \emph{special Whitehead problem} with respect to $K, C$ is
solvable if $K, C$ satisfy this property and for any $u,v\in B$ there is an algorithm to decide whether there exist
$\gamma, \delta\in B, k\in K, c\in C$ such that  $u$ is taken to $k^{\delta} vc^\gamma$ by a ${\mathcal D}$-compatible
automorphism of $B$ sending $c$ to $c^{\gamma}$ and $k$ to $k^{\delta}$ and to find all such $\gamma, \delta, k, c$ and
the corresponding automorphism. If, instead of $u,v$, the same is true for tuples of elements $(u_1,\ldots ,u_m)$ and
$(v_1,\ldots ,v_m)$, we say that the special Whitehead problem with respect to $K, C$ (SWhP(K,C)) is solvable for
tuples.
\end{definition}

\begin{lemma}\label{2}
Let ${\mathcal D}$ be an abelian  JSJ decomposition of a freely indecomposable $G$ without abelian vertex groups.
Suppose $B$ (as in Definition 3) has solvable $SWhP (K, C)$ for tuples, where $C=G_e, K=G_{e'}$ are  edge groups of
$\Gamma$, and $D$ is the vertex group in the abelian decomposition $\mathcal D$ corresponding to the other endpoint of
$e$ not in $\Gamma _1$ ($D$ is then either a MQH subgroup $Q$ or a rigid subgroup $R$). Then $B\ast _{C} D$ has
solvable $SWhP(K,C_1)$ for tuples, for any edge group $C_1=G_{e_1}$ ($e\neq e_1$) of ${\mathcal D}$ belonging to $D$.
\end{lemma}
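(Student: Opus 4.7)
The plan is to exploit the Bass--Serre structure of $B\ast_C D$ to decompose a hypothetical $\mathcal D$-compatible automorphism $\alpha$ of $B\ast_C D$ into an automorphism $\alpha_B$ of $B$ and an automorphism $\alpha_D$ of $D$ that agree on $C$ up to inner adjustment, to enumerate a finite list of candidates for $\alpha_D$ via Lemma~\ref{QH2} (when $D=Q$) or Proposition~\ref{DG} (when $D=R$), and, for each candidate, to reduce the remaining problem on $B$ to an instance of the hypothesised $SWhP(K,C)$ for tuples.

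First I would put each $u_i$ and each $v_i$ into an alternating $B$--$D$ normal form in $B\ast_C D$ using a fixed set of $C$-coset representatives. Since $k\in K\leqslant B$ and $c_1\in C_1\leqslant D$, the element $k^{\delta}v_ic_1^{\gamma}$ also has a controlled normal form, and matching syllable by syllable on both sides of $\alpha(u_i)=k^{\delta}v_ic_1^{\gamma}$ decouples the condition into equations on the $D$-syllables (involving $\alpha_D$, $c$, $c_1$, $\gamma$) and equations on the $B$-syllables (involving $\alpha_B$, $c$, $k$, $\delta$), together with the requirement that $\alpha_B$ and $\alpha_D$ agree on $C$.

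Next I would enumerate $D$-side candidates: if $D=R$ is rigid, Proposition~\ref{DG} supplies a finite, effectively computable list of peripheral-preserving automorphisms of $R$ up to conjugacy, each prescribing specific $\alpha_D(c)=c^{\gamma_0}$ and $\alpha_D(c_1)=c_1^{\gamma_1}$; if $D=Q$ is QH, applying Lemma~\ref{QH2} to the tuple of $D$-syllables of the $u_i$'s paired with those of the $v_i$'s (with $c$ and $c_1$ as the two edge generators) yields a finite list of admissible exponent tuples together with explicit realising automorphisms. For every candidate, the action of $\alpha_D$ on $C$ is determined, forcing $\alpha_B(c)=c^{\gamma_B}$ for a specific $\gamma_B$ (via malnormality of the maximal abelian subgroup containing $C$, afforded by CSA); the residual problem is then an instance of $SWhP(K,C)$ for tuples on $B$, solvable by hypothesis. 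Gluing compatible $\alpha_B$ and $\alpha_D$ along $C$ assembles $\alpha$; exhausting the finite candidate list decides the question, and the requisite finiteness in the definition of $SWhP(K,C_1)$ for $B\ast_C D$ follows from the finiteness on each side.

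The main technical difficulty I anticipate is the normal-form bookkeeping in $\alpha(u_i)=k^{\delta}v_ic_1^{\gamma}$: one must pinpoint which syllables of the right-hand side get absorbed by the prefix $k^{\delta}$ and by the suffix $c_1^{\gamma}$, and argue that the conjugator $\delta\in B\ast_C D$ can be replaced by an element of $B$ (and $\gamma$ by one in $D$) so that the reduction is genuinely to $SWhP(K,C)$ on $B$ and not to a larger problem; this hinges on malnormality/CSA of $C$ and on peripherality of $K$ in $B$. Handling syllables that lie in $C$ itself, and verifying that the selected $\alpha_B$ and $\alpha_D$ glue into a well-defined automorphism of $B\ast_C D$, are the further delicate points.
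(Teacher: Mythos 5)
Your proposal follows essentially the same route as the paper's proof: write $u_i,v_i$ in alternating normal form in $B\ast_C D$, match syllable by syllable to decouple the condition into a $D$-side system (handled by Lemma~\ref{QH2} when $D$ is QH, and trivially/via Proposition~\ref{DG} when $D$ is rigid) and a $B$-side system that is an instance of the hypothesised $SWhP(K,C)$ for tuples, with finiteness of the intermediate exponents coming from the boundedness at the two ends. The technical points you flag (absorption of the prefix $k^{\delta}$ and suffix, placement of conjugators, the case where $C$ and $C_1$ are conjugate in $D$) are exactly the ones the paper treats, at a comparable level of detail.
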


\begin{proof}
It is enough to prove the lemma for the subgroup of canonical automorphisms fixing $D$. Denote it by $AutC_D(G)$. Let
$\alpha \in AutC_D(G)$. The restrictions of $\alpha$ to all QH-subgroups are automorphisms that map edge subgroups into
their conjugates. If $D=R$ is a rigid subgroup, then the statement follows from Lemma 4 because $\alpha$ acts trivially
on $D$.

If $D$ is a QH subgroup, then we can assume that $\alpha$ maps it to itself, and maps $C$ to itself element-wise.

Since $\alpha$ is not a conjugation on $B$, $e'\neq e.$ Suppose, first that $u,v\in D$. Let $c_1 \in C_1$, suppose that
$\alpha$ is a ${\mathcal D}$-compatible automorphism of $D$ such that $\alpha (u)=v{c_1}^{n\gamma}$, $\alpha
(c_1)={c_1}^{\gamma }$. There is only a finite number of possible such $n$. It follows from Lemma 3.5 \cite{LV} that
$c_1^{n\gamma}$ can  be effectively found. By Lemma 3.4 \cite{LV}, SWhP$(C_1)$ is solvable in $D$ for tuples. For any
two tuples $(u_1,\ldots ,u_m)$ and $(v_1,\ldots ,v_m)$, there are finitely many combinations ${c_1}^{n_1},\ldots
,{c_m}^{n_m}\in C_1$ such that   $u_1,\ldots ,u_m$  can be taken to $v_1c^{n_1\gamma},\ldots ,v_mc^{n_m\gamma}$.

Let now, $u=b_1d_1\cdots b_nd_n$ and $v=\bar b_1\bar d_1\cdots \bar b_n\bar d_n$, where $b_i,\bar b_i\in B,$ $d_i,\bar
d_i\in D$ be normal forms of $u$ and $v$ in $B*_CD$.

We assume, first that $C$ and $C_1$ are not conjugate in $D$. Without loss of generality, we can assume that $u$ and
$v$ are cyclically reduced. Every $\mathcal D$-compatible automorphism (of $B*_CD$) $\alpha$ taking $u$ to
$k^{\delta}v{\gamma} ^{\sigma}$, $k\in K, $ should act as follows:
$$
\begin{array}{ll} \alpha (b_1)=k^{\delta}\bar b_1c^{k_1}, & \\ \alpha (d_i)=c^{-k_{i}}\bar d_{i} c^{m_i} &
\text{for } i=1,\ldots ,n-1, \\ \alpha (b_i)=c^{-m_{i-1}}\bar b_ic^{k_i} & \text{for } i=2,\ldots ,n, \\ \alpha
(d_n)=c^{-k_n}\bar d_n{\gamma}^{\sigma}. & \end{array}
 $$
Moreover, the number of possible values for $\gamma , k,\ k_1, k_n$ is finite. Therefore the number of possible values
for $k_i,m_i$ is finite by Lemma 2.5 \cite{LV}. Since $SWhP(K,C)$ is solvable for tuples in $B$ and $SWhP(C,C_1)$ is
solvable for tuples in $D$ (Lemma 3), one can decide whether some $\mathcal D$-compatible automorphisms $\alpha $ of
$B$ and $\beta$ of $D$ and a tuple of integers $(k_1,m_1,\ldots ,k_n,m_n)$  exist such that
$$
\begin{array}{ll} \alpha (b_1)=k^{\delta}\bar b_1c^{k_1}, & \\ \beta (d_i)=c^{-k_{i}}\bar d_{i} c^{m_i} &
\text{for } i=1,\ldots ,n-1, \\ \alpha (b_i)=c^{-m_{i-1}}\bar b_ic^{k_i} & \text{for } i=2,\ldots ,n, \\ \beta
(d_n)=c^{-k_n}\bar d_n{\gamma}^{\sigma}. & \end{array}
 $$

If they exist then there also exists a $\mathcal D$-compatible automorphism (of $B*_CD$) fixing $C_1$ and taking $u$ to
$k^{\delta}v\gamma ^{\sigma}$, $k\in K,\gamma\in C_1$; otherwise, it does not exist.

Now we consider the case when $C$ and $C_1$ are conjugate in $D$. In this case, we can assume $C=C_1$. Hence, every
$\mathcal D$-compatible automorphism $\alpha$ (of $B*_CD$) fixing $C_1$ and taking $u$ to $k^{\delta}v\gamma
^{\sigma}$, $\gamma\in C_1$ should act as follows:
 $$
\begin{array}{ll} \alpha (b_1)=\gamma _0k^{\delta}\bar b_1c^{k_1}, & \\ \alpha (d_i)=c^{-k_i}\bar d_{i} c^{m_i} &
\text{for } i=1,\ldots ,n-1, \\ \alpha (b_i)= c^{-m_{i-1}}\bar b_ic^{k_i} & \text{for } i=2,\ldots ,n, \\ \alpha
(d_n)=c^{-k_n}\bar d_n\gamma^{\sigma}, \\ \end{array}
 $$
where $\gamma_0, \gamma, c\in C$, $i=1,\ldots ,n$.

Since we can post-compose the restriction of $\alpha$ on $B$ and on $D$ with conjugation by $\gamma _0$, the question
of finding such $\alpha$ is equivalent to the problem of finding $\alpha$ when $\gamma _0=1.$ And that problem have
been considered in the previous case.

The lemma is proved for elements. The same proof works similarly if, instead of $u$ and $v$, we consider tuples of
elements.
\end{proof}

\begin{lemma}~\label{3}
Suppose $B$ has solvable $SWhP(K_1,C_1)$ for tuples for any edge groups $K_1$, $C_1=G_{e_1}$ of $\Gamma _1$, and $D$ is
a (non-abelian) vertex group in the abelian decomposition $\mathcal D$ not in $\Gamma _1$. Then $B\ast _{C} D$ has
solvable $SWhP(K_2,C_2)$ for tuples, for any edge groups $K_2$, $C_2=G_{e_2}$ of ${\mathcal D}$.
\end{lemma}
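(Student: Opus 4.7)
The plan is to mimic the strategy of Lemma~\ref{2}, extending it by a case analysis on the positions of $K_2$ and $C_2$ relative to the newly attached vertex $D$. As in Lemma~\ref{2}, it suffices to work with the subgroup of canonical automorphisms fixing $D$ setwise; any such $\alpha$ restricts to a $\mathcal D$-compatible automorphism of $B$ on one side and to a peripheral-preserving automorphism of $D$ on the other, the two restrictions being coupled through the $c$-twist exponents $k_{i,j}, m_{i,j}$ that appear between consecutive syllables once each $u_i$ and $v_i$ is written in normal form $b_{i,1} d_{i,1} \cdots b_{i,n_i} d_{i,n_i}$ over $B *_C D$.

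I would first dispose of the cases where either $K_2$ or $C_2$ is incident to $D$. If both are, then the existence of the desired restriction of $\alpha$ to $D$ is a peripheral-preserving Whitehead problem for the single vertex $D$, solved by Lemma~\ref{QH2} when $D$ is QH and by Proposition~\ref{DG} when $D$ is rigid; combined with the normal-form decoupling already carried out in Lemma~\ref{2}, this gives SWhP$(K_2, C_2)$ for $B *_C D$. If exactly one of $K_2, C_2$ is incident to $D$ and the other is an edge of $\Gamma_1$, the statement is either Lemma~\ref{2} itself or its symmetric variant in which the roles of the left multiplier $k^{\delta}$ and the right multiplier $c_2^{n\gamma}$ are exchanged; the latter goes through \emph{verbatim} with left and right interchanged throughout the ansatz.

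The remaining, main, case is when neither $K_2$ nor $C_2$ is incident to $D$, so that SWhP on $B$ with respect to the pair $(K_2, C_2)$ is available by hypothesis. Here I would repeat the normal-form analysis of Lemma~\ref{2}, which produces syllable-by-syllable equations on $\alpha$ involving unknown $c$-twist exponents $(k_{i,j}, m_{i,j})$. The admissible values of these exponents are finite and effectively enumerable by Lemma~\ref{QH2} or Proposition~\ref{DG} applied inside $D$; for each such choice, the existence of a compatible restriction of $\alpha$ to $B$ becomes an instance of SWhP on $B$ with respect to one of the pairs $(K_2, C_2)$, $(K_2, C)$, $(C, C_2)$, $(C, C)$, all solvable by hypothesis, while the corresponding restriction to $D$ is again obtained from Lemma~\ref{QH2} or Proposition~\ref{DG}. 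The main obstacle I foresee is the degenerate situation in which $C$ is conjugate in $D$ to $C_2$ (or to $K_2$), but this is precisely the subtle second half of the proof of Lemma~\ref{2}: one absorbs a constant $\gamma_0 \in C$ into each side to reduce back to the generic case, and the same device applies here.
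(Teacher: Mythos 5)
Your proposal follows essentially the same route as the paper: a case analysis on whether the edges carrying $K_2$ and $C_2$ are incident to $D$ or lie on the $\Gamma_1$ side, reducing either to Lemma~\ref{2} directly or to a rerun of its normal-form decoupling argument with the ansatz adjusted at the two ends (the paper takes $u=b_1d_1\cdots b_n$ in the second case). The paper's own proof is only a two-case sketch deferring to Lemma~\ref{2}, so your version is, if anything, more explicit about which instances of the hypothesis and of Lemma~\ref{QH2}/Proposition~\ref{DG} are invoked.
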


\begin{proof}
If $e_2$ is an edge outgoing from the vertex with vertex group $D$, then the statement follows from the previous lemma.
If $e_2$ is an edge outgoing from the vertex with vertex group in $\Gamma _1$, then $C_2$ is an edge group of $\Gamma
_1$. Then we can use the fact that $B$ has solvable $SWhP (C_1)$ for tuples for any edge group $C_1=G_{e_1}$ of $\Gamma
_1$, in particular for $C_2$, and write a proof similar to the proof of the previous lemma with $u=b_1d_1\cdots b_n$
and $v=\bar b_1\bar d_1\cdots \bar b_n$.
\end{proof}

\begin{lemma}\label{base} Let $B$ be the fundamental group of a connected subgraph $\Gamma _1$ of $\Gamma$.
$SWhP(K,C)$ is solvable for tuples in $B$ for any edge groups $K,C$ of $\Gamma$.
\end{lemma}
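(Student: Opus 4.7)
\noindent\textit{Proof proposal.} The plan is to induct on the number of vertices of $\Gamma_1$, driving the inductive step by Lemma~\ref{3} and dispatching the base case of a single vertex by combining Lemma~\ref{QH2} (for QH-vertices) with Proposition~\ref{DG} and the unique-factorisation lemma (the unlabelled Lemma~4 in the excerpt, for rigid vertices).

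For the base case, $B = G_v$ is a single vertex group, either QH or rigid. In the QH case, a $\mathcal D$-compatible automorphism of $B$ is by definition an automorphism preserving each incident edge subgroup up to conjugacy, so Lemma~\ref{QH2} supplies directly both the finiteness of admissible $(k,c)$ and an algorithm enumerating them together with a realising automorphism (with $K$ and $C$ playing the roles of $D$ and $C$ in that lemma). In the rigid case, Proposition~\ref{DG} effectively produces a finite list $\alpha_1,\ldots,\alpha_s$ of conjugacy-class representatives of $\mathcal D$-compatible automorphisms of $B$; for each $\alpha_j$ the required conditions $\alpha_j(u_i) = k^\delta v_i c^\gamma$, $\alpha_j(c)=c^\gamma$, $\alpha_j(k)=k^\delta$ form a finite system of equations and membership conditions in the toral relatively hyperbolic group $B$, decidable by the solvability of equations (Dahmani). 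Lemma~4 forces $(\gamma,\delta)$ to be uniquely determined by $(k,c)$ (whenever such a pair exists), and malnormality of maximal abelian subgroups (CSA) then bounds the number of admissible $(k,c)$.

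For the inductive step with $\lvert V(\Gamma_1)\rvert\ge 2$, I would fix a spanning tree $T_1$ of $\Gamma_1$, pick a leaf $v$ of $T_1$, and let $\Gamma_1'$ be the connected subgraph obtained by removing $v$ together with its incident tree edge $e$. By the inductive hypothesis $B' = \pi_1(\Gamma_1')$ has solvable $SWhP$ for tuples for every pair of edge groups of $\Gamma$; since $B = B' *_{G_e} G_v$, Lemma~\ref{3} upgrades this to $SWhP(K_2,C_2)$ for tuples in $B$ for every pair of edge groups of $\mathcal D$. Any non-tree edges of $\Gamma_1$ are absorbed one at a time as HNN factors, using the same normal-form bookkeeping as in the proof of Lemma~\ref{2}, with a stable letter in place of one of the vertex identifications.

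The main obstacle is the rigid base case: turning $\mathcal D$-compatibility of an automorphism of a rigid vertex group into a concrete, decidable system of equations, while simultaneously extracting the finiteness of admissible $(k,c)$, genuinely requires both Dahmani's equation algorithm and the CSA malnormality of maximal abelian subgroups. Everything else is essentially bookkeeping: the QH base case is a direct quotation of Lemma~\ref{QH2}, and the inductive step is a direct invocation of Lemma~\ref{3} (and its HNN analogue).
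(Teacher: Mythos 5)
Your proposal follows essentially the same route as the paper: induction over the subgraph, with Lemma~\ref{3} handling the tree edges, the QH and rigid base cases dispatched by Lemma~\ref{QH2} and Lemma~4/Proposition~\ref{DG} respectively, and the remaining non-tree edges handled by HNN normal-form bookkeeping. The only point where the paper is more explicit than you is that HNN step: it computes conjugacy-class normal forms with respect to the last HNN extension, enumerates all simultaneous conjugates of the $u_i$ having the same syllable structure as the $\bar v_i$ (using decidability of membership in cosets of the edge group), and reduces to the subgroup of canonical automorphisms fixing the base group $H$.
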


\begin{proof}
We use Lemma \ref{3} and add to $\Gamma _1$ by induction edges that do not belong to the maximal subtree. Let
$u_1,\ldots ,u_n, v_1,\ldots ,v_n\in B$, and compute  normal forms of the conjugacy classes of $u_1,\ldots ,u_n$ and $
v_1,\ldots ,v_n$ with respect to the last HNN-extension, $B=H_{*D}=<B,t|d^t=d', d\in D>.$ Denote these normal forms by
$\bar u_1,\ldots ,\bar u_n$ and $\bar v_1,\ldots ,\bar v_n$. Consider all simultaneous conjugates of normal forms of
$u_1,\ldots ,u_n$, $i=1,\ldots ,k,$ that have the same syllable structure as $\bar v_1,\ldots ,\bar v_n$. We can do
this because the membership problem in maximal abelian subgroups of $B$ is solvable, therefore we can decide when two
elements belong to the same coset of the edge group. If there is no such conjugate, then $u_1,\ldots ,u_n$ and
$v_1,\ldots ,v_n$ are not in the same orbit of $AutC (G)$. Otherwise, make a list of all of them and let us check, one
by one, whether they are in the same $AutC_H(G)$-orbit as $\bar v_1,\ldots ,\bar v_n$, where  $AutC_H(G)$ is the
subgroup of the group of canonical automorphisms $AutC(G)$ fixing $H$. If one does then $u_1,\ldots ,u_n$ and
$v_1,\ldots ,v_n$ are in the same $AutC(G)$-orbit; otherwise, they don't.
\end{proof}

\begin{proposition}
Let ${\mathcal D}$ be an abelian  JSJ decomposition of a freely indecomposable $G\in{\mathcal G}$, with graph $\Gamma$,
and let $H$ be a designated vertex group in $\mathcal D$. Then the WhP  is solvable for the group $AutC_H(G)$ of
canonical automorphisms fixing $H$.
\end{proposition}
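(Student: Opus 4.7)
The plan is to run the iterative construction used in the proof of Lemma \ref{base}, but tailored to the designated vertex group $H$, ultimately reducing the WhP in $AutC_H(G)$ to a finite sequence of SWhP calls. The overarching idea is to build $G$ from $H$ by adjoining the edges of $\Gamma$ one at a time, and at each stage invoke Lemmas \ref{2}, \ref{3}, and \ref{base} to maintain solvability of the appropriate Whitehead-type problem.

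Concretely, I would choose a spanning tree $T$ of $\Gamma$ containing the vertex of $H$, together with the maximal subforest $T_1\subseteq T$ joining the non-abelian vertex groups, exactly as in the setup of the paper. Ordering the edges outward from $H$---first the $T_1$-edges (as amalgamations), then the non-tree edges (as HNN extensions), then the remaining edges of $T\setminus T_1$---produces a sequence of connected subgraphs $\Gamma_0=\{v_H\}\subset \Gamma_1\subset \cdots \subset \Gamma_m=\Gamma$, with fundamental groups $H=B_0\subset B_1\subset \cdots \subset B_m=G$. Lemma \ref{base} then ensures that, for each proper $B_i$, $SWhP(K,C)$ for tuples is solvable with respect to any edge groups $K,C$ of $\Gamma$ sitting outside $\Gamma_i$.

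For the final extension $B_{m-1}\to G$ I would treat WhP in $AutC_H(G)$ by the same mechanism that appears in the proof of Lemma \ref{base}. Given tuples $(u_1,\ldots,u_n)$ and $(v_1,\ldots,v_n)$ in $G$, compute their normal forms relative to the outermost piece of the decomposition. In the HNN case, enumerate all simultaneous conjugates of the $u_i$ having the same syllable structure as the $v_i$; this list is finite and effectively computable since membership in maximal abelian edge subgroups is decidable via solvability of equations in toral relatively hyperbolic groups~\cite{Dah}, and then invoke $SWhP$ for tuples in $B_{m-1}$ to decide, for each candidate, whether a canonical automorphism fixing $H$ realises the matching. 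In the amalgamation case, the syllable-by-syllable analysis of the proof of Lemma \ref{2} reduces the decision to $SWhP$ for tuples in $B_{m-1}$ together with $SWhP$ inside the newly added vertex group.

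The main obstacle is keeping track of the ``fixing $H$'' constraint through the iteration: the SWhP machinery of Lemmas \ref{2}--\ref{base} provides $\mathcal D$-compatible automorphisms which, \emph{a priori}, only preserve vertex subgroups up to conjugation, whereas we require automorphisms that restrict to the identity on $H$. Because the induction starts at $B_0=H$, where the only $\mathcal D$-compatible automorphism fixing $H$ is the identity, each successive extension genuinely extends an automorphism already identically fixing $H$; the finitely many conjugating parameters introduced at each edge addition can then be pinned down effectively. Verifying this bookkeeping---and in particular that the finite lists of candidates returned by the subordinate SWhP calls translate correctly into a WhP decision---is the crux of the argument.
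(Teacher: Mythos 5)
Your construction reproduces, essentially, the content of Lemma~\ref{base}: building the fundamental group of $\Gamma$ edge by edge and propagating solvability of $SWhP$ through Lemmas~\ref{2} and~\ref{3}. But that entire machinery is set up only for decompositions \emph{without abelian vertex groups}: Definition~3 of $SWhP$ explicitly assumes the graph has no abelian vertices, Lemma~\ref{2} requires the newly attached vertex group $D$ to be a MQH or rigid subgroup, and Lemma~\ref{3} requires $D$ to be non-abelian. So when your ordered sequence of subgraphs reaches an edge whose far endpoint carries an abelian vertex group $A$, none of the lemmas you cite applies, and your claim that ``Lemma~\ref{base} then ensures that for each proper $B_i$ the $SWhP$ is solvable'' breaks down. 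This is not a bookkeeping issue: the canonical automorphisms of type~\ref{e:abelian vertices} in Definition~\ref{defn:canonical autos} (automorphisms of an abelian vertex group preserving its peripheral subgroups) form an infinite group in general --- for $A\cong\mathbb{Z}^m$ with peripheral subgroup of smaller rank one gets transvections and a $GL$-factor --- and the finiteness statements underlying $SWhP$ (finitely many exponents $n$, finitely many conjugators $c\in C$) simply fail for such vertices. The paper's proof is an induction on the \emph{number of abelian vertex groups}, with the base case being your argument (Lemma~\ref{base}); the inductive step, which is the actual content of the Proposition, splits into the case where $A$ meets one non-abelian vertex group (handled by a syllable-by-syllable normal-form analysis over $P*_CA$) and the harder case where $A$ meets several QH-subgroups, which requires writing $A=A_1\times A_2$ with the edge groups of finite index in $A_1$, adjoining $A_1$ first, and only then amalgamating the rest of $A$. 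None of this appears in your proposal.

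Separately, the difficulty you single out as ``the crux'' --- tracking the condition that automorphisms fix $H$ --- is comparatively minor and is absorbed automatically by the relative versions $AutC_P(G)$, $AutC_H(G)$ used at each stage of the paper's induction; your instinct to start the iteration at $H$ is fine. The missing idea is the treatment of abelian vertices, and in particular the $A=A_1\times A_2$ device, without which the argument does not go through.
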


\begin{proof}
We use induction on the number of abelian vertex groups and the fact that no two abelian vertices are adjacent to each
other (therefore we can transform the decomposition in such a way that every non-cyclic abelian subgroup is only
connected to one non-abelian vertex group). The base of induction, namely the case when there is no abelian vertex
groups follows from lemma~\ref{base}.

Suppose we can solve the WhP for $AutC(P)$ when $P$ has smaller  number of abelian subgroups. We fix an abelian
subgroup $A$. It is connected only to non-abelian vertex groups in $\mathcal D$, and let us distinguish two cases:

\emph{Case 1: $A$ is connected to only one non-abelian vertex group in $\mathcal D$.} Let $u_1,\ldots ,u_n, v_1,\ldots
,v_n\in G$. We compute normal forms of $v_1,\ldots ,v_n$ (with respect to the amalgamated product $P*_CA$). Denote them
by $\bar v_1,\ldots ,\bar v_n$. Consider all simultaneous conjugates of normal forms of $u_1,\ldots ,u_n$. that have
the same syllable structure as $\bar v_1,\ldots ,\bar v_n$. If there is no such conjugate, then $(u_1,\ldots ,u_n)$ and
$(v_1,\ldots ,v_n)$ are not in the same orbit of $AutC (G)$. Otherwise, make a list of all of them and let us check,
one by one, whether they are in the same $AutC_P(G)$-orbit as $(\bar v_1,\ldots ,\bar v_n)$, where $AutC_P(G)$ is the
subgroup of the group of canonical automorphisms $AutC(G)$ fixing $P$. If one of the conjugates of $(u_1,\ldots ,u_n)$
is in the same $AutC_P(G)$-orbit as $(\bar v_1,\ldots ,\bar v_n)$, then $(u_1,\ldots ,u_n)$ and $(v_1,\ldots ,v_n)$ are
in the same $AutC(G)$-orbit; otherwise, they are not.

To check whether the tuple $(u_1,\ldots ,u_n)$ is in the same $AutC_P(G)$-orbit as a given tuple $(v_1,\ldots , v_n)$
with the same syllable structure, we represent for each $i$ the elements $u_i,v_i$ in normal form as
 $$
u_i=a_0r_1a_1r_2\cdots a_{n-1}r_na_n,\quad v_i=\bar a_0\bar r_1\bar a_1\bar r_2\cdots \bar a_{n-1}\bar r_n\bar a_n.
 $$
By induction, we can check whether there exists an automorphism sending $r_j, j=1,\ldots ,n$, to elements of the form
$c_{1j}\bar r_jc_{2j}$, where $c_{1j},c_{2j}\in C$. If it does not exist, then there is no automorphism sending $u_i$
to $v_i$. If it exists then by Lemma~\ref{base} and Lemma 2.5 from~\cite{LV} there is only a finite number of possible
images for the $r_i$'s. We can apply such automorphism $\alpha$ and assume that
 $$
u_i=a_0r_1a_1r_2\cdots a_{n-1}r_na_n,\quad v_i=\hat a_0r_1\hat a_1r_2\cdots \hat a_{n-1}r_n\hat a_n.
 $$
It only remains to check whether we can extend $\alpha$ in such a way that $\hat a_0=\alpha (a_0), \hat a_i=\alpha
(a_i), \hat a_n=\alpha (a_n)$. If such extension doesn't exist, then $u_i$ and $v_i$ are not in the same orbit. The
argument with tuples is similar.

\emph{Case 2: $A$ is connected to several QH-subgroups.} We represent $A$ as $A=A_1\times A_2$, where the subgroup
generated in $A$ by the edge groups has finite index in $A_1$. Canonical automorphisms map $A_1$ identically to itself
modulo conjugation. We first add $A_1$ to $P$, denote the fundamental group of the obtained graph of groups by $P_1$,
and prove that for any edge group $K$ of $\Gamma$ the problem $SWhP_1(K,A_1)$ is solvable for tuples. Then we consider
$P_1*_{A_1}A$ and repeat the argument done in the first case.

The proposition is proved. This also completes the proof of the theorem.
\end{proof}

\section*{Acknowledgements}
The authors would like to thank the hospitality of CRM-Montr\'{e}al during their research stay there on the fall of 2010.
The first named author gratefully acknowledges partial support from the CUNY and NSERC grants and from MEC (Spain) through
project number MTM2008-01550. The second named author gratefully acknowledges partial support from the MEC (Spain) through
projects number MTM2008-01550 and PR2010-0321.

\end{document}